\numberwithin{equation}{section}
\def\blue{\textcolor{blue}}
\def\red{\textcolor{red}}
\theoremstyle{plain}
\newtheorem{theorem}{Theorem}[section]
\newtheorem{lemma}[theorem]{Lemma}
\newtheorem{proposition}[theorem]{Proposition}
\theoremstyle{definition}
\newtheorem{conjecture}[theorem]{Conjecture}
\newtheorem{remark}[theorem]{Remark}
\newtheorem{?}[theorem]{Problem}
\newcommand{\N}{\mathbb{N}}
\newcommand{\Z}{\mathbb{Z}}
\newcommand{\Q}{\mathbb{Q}}
\def\max{\mathrm{max}}
\def\AW{\mathcal{AW}}
\def\asc{\mathrm{asc}}
\def\max{\operatorname{max}}
\def\I{\operatorname{{\bf I}}}
\def\PT{\mathrm{PT}}
\def\PA{\mathcal{PA}}
\def\A{\mathcal{A}}
\def\boxit#1{\leavevmode\hbox{\vrule\vtop{\vbox{\kern.33333pt\hrule\kern1pt\hbox{\kern1pt\vbox{#1}\kern1pt}}\kern1pt\hrule}\vrule}}
\begin{document}

\title[Restricted inversion sequences]{Restricted inversion sequences and enhanced $3$-noncrossing partitions}

\author[Z. Lin]{Zhicong Lin}
\address[Zhicong Lin]{School of Science, Jimei University, Xiamen 361021, P.R. China
\& Fakult\"at f\"ur Mathematik, Universit\"at Wien, 1090 Wien, Austria} 
\email{zhicong.lin@univie.ac.at}

\date{\today}

\begin{abstract} We prove  a conjecture due independently to Yan and Martinez--Savage that asserts inversion sequences with no weakly decreasing subsequence of length $3$ and enhanced $3$-noncrossing partitions have the same cardinality. Our approach applies both the generating tree technique and the so-called obstinate  kernel method developed by Bousquet-M\'elou. One application of this  equinumerosity is a discovery of an intriguing identity involving numbers of classical and enhanced $3$-noncrossing partitions.
\end{abstract}

\keywords{Inversion sequences, enhanced $3$-noncrossing partitions, generating trees, kernel method}

\maketitle


\section{Introduction}
Set partitions avoiding $k$-crossings and $k$-nestings have been extensively studied from the points of view of both  Combinatorics and Mathematical Biology; see~\cite{chen,chen2,kr0} and the references therein. The bijection between partitions and vacillating (resp.~hesitating) tableaux  due to Chen, Deng, Du, Stanley and Yan~\cite{chen} is now a fundamental tool for analyzing classical (resp.~enhanced) $k$-crossings and $k$-nestings. In particular, these two bijections were applied by Bousquet-M\'elou and Xin~\cite{bx} to enumerate set partitions avoiding classical or enhanced 3-crossings. After their work, the sequence $\{C_3(n)\}_{n\geq1}$ (resp.~$\{E_3(n)\}_{n\geq1}$) where $C_3(n)$ (resp.~$E_3(n)$) is the number of partitions of $[n]:=\{1,2,\ldots,n\}$ avoiding classical (resp.~enhanced) $3$-crossings  has been registered  as \href{https://oeis.org/A108304}{A108304} (resp.~\href{https://oeis.org/A108307}{A108307}) in OEIS:
\begin{align*}
\{C_3(n)\}_{n\geq1}&=\{1,2,5,15,52,202,859,3930,\ldots\},\\
\{E_3(n)\}_{n\geq1}&=\{1,2,5,15,51,191,772,3320,\ldots\}.
\end{align*}
The main purpose of this paper is to show that the sequence $\{E_3(n)\}_{n\geq1}$ also enumerates  inversion sequences with no weakly decreasing subsequence of length $3$. As we will see, this implies the following intriguing identity between $\{C_3(n)\}_{n\geq1}$ and $\{E_3(n)\}_{n\geq1}$: 
\begin{equation}\label{eq:3cros}
C_3(n+1)=\sum_{i=0}^n{n\choose i}E_3(i),
\end{equation}
where we use the convention $E_3(0)=1$.

It is convenient to recall some necessary definitions. For each $n\geq1$, let $\I_n$ be the set of {\em inversion sequences} of length $n$  defined as 
$$\I_n:=\{(e_1,e_2,\ldots,e_n): 0\leq e_i<i\}.$$
An inversion sequence $e\in\I_n$ is said to  be {\em$(\geq,\geq,-)$-avoiding} if there does not exist $i<j<k$ such that $e_i\geq e_j\geq e_k$.
The set of all $(\geq,\geq,-)$-avoiding inversion sequences in $\I_n$ is denoted by $\I_n(\geq,\geq,-)$. For example, we have 
$$\I_3(\geq,\geq,-)=\{(0,0,1),(0,0,2),(0,1,0),(0,1,1),(0,1,2)\}.$$
Recently, Martinez and Savage~\cite{ms} studied this class of restricted inversion sequences and suspected the following connection with enhanced $3$-noncrossing partitions. 

\begin{conjecture}[Yan~\cite{yan} \& Martinez--Savage~\cite{ms}]\label{conj:yan}
The cardinality of $\I_n(\geq,\geq,-)$ is $E_3(n)$. 
\end{conjecture}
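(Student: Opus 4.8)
\emph{A generating tree for $\I_n(\geq,\geq,-)$.} The plan is to prove Conjecture~\ref{conj:yan} by a generating-tree analysis combined with the obstinate kernel method of Bousquet-M\'elou, and then to deduce~\eqref{eq:3cros} by comparing generating functions. First I would describe how an element $e=(e_1,\dots,e_n)\in\I_n(\geq,\geq,-)$ is grown by appending a last letter. Appending $e_{n+1}=v$ (with $0\le v\le n$) stays inside $\I(\geq,\geq,-)$ precisely when no pair $i<j\le n$ with $e_i\ge e_j$ also has $e_j\ge v$, i.e.\ when $v$ strictly exceeds
\[ \alpha(e):=\max\{e_j:\ \exists\, i<j\text{ with }e_i\ge e_j\} \]
(with $\alpha(e):=-1$ when $e$ is strictly increasing); thus the admissible values are $\alpha(e)<v\le n$, and $e$ has $n-\alpha(e)$ children. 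How $\alpha$ changes is dictated by a second statistic $\mu(e):=\max_i e_i$: appending $v$ with $\alpha(e)<v\le\mu(e)$ makes $e_{n+1}$ fail to be a left-to-right maximum and raises $\alpha$ to $v$, while appending $v$ with $\mu(e)<v\le n$ leaves $\alpha$ unchanged and raises $\mu$ to $v$. Writing $c=n-\alpha(e)$ and $g=n-\mu(e)$ (so that $1\le g\le c$), one obtains the succession rule
\begin{multline*}
(c,g)\ \longrightarrow\ (c{+}1,1),\,(c{+}1,2),\,\dots,\,(c{+}1,g)\,;\\
(g{+}1,g{+}1),\,(g{+}2,g{+}1),\,\dots,\,(c,g{+}1),
\end{multline*}
rooted at $(2,1)$ (the sequence $(0)\in\I_1$); one checks that its first few levels have $1,2,5,15,51$ nodes.

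\emph{The functional equation.} Put $F(x;u,v)=\sum_{n\ge1}x^n\sum u^{c}v^{g}$, the inner sum over the labels on level $n$. Summing $x\,u^{c'}v^{g'}$ over all children of all nodes and using geometric series, the succession rule becomes
\begin{multline*}
F(x;u,v)=xu^2v+\frac{xuv}{1-v}\bigl(F(x;u,1)-F(x;u,v)\bigr)\\
{}+\frac{xuv}{1-u}\bigl(F(x;1,uv)-F(x;u,v)\bigr),
\end{multline*}
or, after clearing denominators and setting $K(x;u,v):=(1-u)(1-v)+xuv(2-u-v)$,
\begin{multline*}
K(x;u,v)\,F(x;u,v)=xu^2v(1-u)(1-v)\\
{}+xuv(1-u)F(x;u,1)+xuv(1-v)F(x;1,uv).
\end{multline*}
The series of interest is $F(x;1,1)=\sum_{n\ge1}|\I_n(\geq,\geq,-)|\,x^n$; since $K(x;1,1)=0$, it is not obtained by a direct specialization.

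\emph{Solving by the obstinate kernel method.} This is the technical heart, and the step I expect to be hardest. Following the treatment of $3$-crossings in~\cite{bx}, I would determine the group of birational transformations of $(u,v)$ fixing $K$ --- generated by the two involutions that swap the roots of $K$ seen as a quadratic in $u$, resp.\ in $v$ --- check that it is finite, pick for each group element the branch that is an admissible formal power series in $x$, and form the alternating sum of the functional equation over the orbit of $(u,v)$. In this sum the unknown series $F(x;u,1)$ and $F(x;1,uv)$ cancel, leaving $F(x;u,v)$, and in particular $F(x;1,1)$, expressed as the positive part (or a constant term) of an explicit algebraic function. The difficulties are the usual ones for this method: correctly identifying the symmetry group and the admissible branches, controlling convergence of the iterated substitutions, and reducing the orbit sum to a usable closed form. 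To conclude the proof of the conjecture I would then match $F(x;1,1)$ with the generating function of $\{E_3(n)\}_{n\ge1}$ found by Bousquet-M\'elou and Xin in~\cite{bx}, most conveniently by verifying that $|\I_n(\geq,\geq,-)|$ obeys their $P$-recurrence for $E_3(n)$ (equivalently, that the two generating functions are annihilated by the same linear differential operator) together with agreement for small~$n$.

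\emph{The identity~\eqref{eq:3cros}.} Once Conjecture~\ref{conj:yan} is known, \eqref{eq:3cros} is equivalent to
\[ \sum_{n\ge1}C_3(n)\,x^n=\frac{x}{1-x}\,\mathcal{E}\!\left(\frac{x}{1-x}\right),\qquad \mathcal{E}(x):=\sum_{n\ge0}E_3(n)\,x^n, \]
the right-hand side being the standard generating function of a binomial transform. I would prove this by comparing the explicit form of $\mathcal{E}$ found above with the known generating function for $C_3(n)$ from~\cite{bx} --- again reducing to matching differential equations plus finitely many initial coefficients --- and I expect, more illuminatingly, that the substitution $x\mapsto x/(1-x)$ already transforms the functional equation above into the one governing $C_3(n)$, which would make the binomial-transform structure transparent.
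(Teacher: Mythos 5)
Your proposal is correct and follows essentially the same route as the paper: your labels $(c,g)=(n-\beta(e),\,n-\alpha(e))$ are a reparametrization of the paper's $(p,q)=(\alpha(e)-\beta(e),\,n-\alpha(e))$ via $(c,g)=(p+q,q)$, your succession rule and functional equation coincide with the paper's (the latter is exactly its equation after the substitution $v=uw$), and the kernel $K$ is the same Baxter-type kernel the paper exploits. The remaining steps you outline --- the obstinate kernel method over the finite orbit, positive-part extraction, and matching the resulting P-recurrence against the Bousquet-M\'elou--Xin recurrence for $E_3(n)$, as well as deriving \eqref{eq:3cros} by the substitution $t\mapsto t/(1-t)$ in the differential equation --- are precisely what the paper carries out.
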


In fact, this conjecture has already been proposed by Yan~\cite{yan} several years before in proving a conjecture of Duncan and Steingr\'imsson~\cite{ds}. 
We notice that in~\cite{yan} there is an interesting bijection  between $(\geq,\geq,-)$-avoiding inversion sequences and $210$-avoiding primitive ascent sequences as we review below. 

Recall that a sequence of integers $x=x_1x_2\cdots x_n$ is called an {\em ascent sequence} if it satisfies $x_1=0$ and for all $2\leq i\leq n$, $0\leq x_i\leq \asc(x_1x_2\cdots x_{i-1})+1$, where 
$$\asc(x_1x_2\cdots x_{i-1})=|\{j\in[i-2]:x_j<x_{j+1}\}|
$$ is the ascent number of $x_1x_2\cdots x_{i-1}$. 
Such an ascent sequence $x$ is said to be 
\begin{itemize}
\item {\em $210$-avoiding}, if $x$ does not have decreasing subsequence of length $3$;
\item {\em primitive}, if $x_i\neq x_{i+1}$ for all $i\in[n-1]$.
\end{itemize}
Denote by $\A_n(210)$ and $\PA_n(210)$ the set of all $210$-avoiding ordinary and primitive ascent sequences of length $n$, respectively. For example, we have
$$
\A_3(210)=\{000,001,010,011,012\}\text{ and }\PA_4(210)=\{0101,0102,0120,0121,0123\}.
$$
Via an intermediate structure of growth diagrams for $01$-fillings of Ferrers shapes, Yan~\cite{yan} proved combinatorially the following equinumerosity, which was first conjectured in~\cite[Conjecture~3.3]{ds}. 

\begin{theorem}[Main result of Yan~\cite{yan}] \label{thm:yan}
The cardinality of $\A_n(210)$ is $C_3(n)$. 
\end{theorem}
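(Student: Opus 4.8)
The plan is to establish Theorem~\ref{thm:yan} with the same machinery the paper develops for $\I_n(\geq,\geq,-)$: a generating tree for $\A_n(210)$, a functional equation with catalytic variables, Bousquet-M\'elou's obstinate kernel method, and a final comparison with the generating function of $\{C_3(n)\}$ obtained by Bousquet-M\'elou and Xin~\cite{bx}.

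First I would describe how a $210$-avoiding ascent sequence $x=x_1\cdots x_n$ can be extended by one letter. Appending $j$ on the right yields an ascent sequence precisely when $0\le j\le\asc(x)+1$, and it preserves $210$-avoidance precisely when $j\ge m(x)$, where $m(x):=\min\{x_k:x_i>x_k\text{ for some }i<k\}$ is the smallest bottom of a descent of $x$ (with no such constraint when $x$ is weakly increasing). A short argument shows that $m(x)$ never changes again once $x$ acquires its first descent, so the only data that must be carried down the tree are $\asc(x)$ and the last letter $x_n$, together with a bounded flag (say, whether $x$ is still weakly increasing). One then reads off an explicit succession rule: $x$ has $\asc(x)+2$ children if $x$ is weakly increasing and $\asc(x)+2-m(x)$ children otherwise, and the label of each child is determined by $\asc(x)$ and $x_n$.

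Next I would encode the tree by a series such as $F(t;u,v)=\sum_{x}t^{|x|}u^{\asc(x)}v^{x_n}$, the exact choice of catalytic variables being dictated by the succession rule above. That rule turns into a linear equation $K(t;u,v)F(t;u,v)=A(t;u,v)+(\text{terms in which }F\text{ is specialized in }u\text{ and }v)$ with kernel $K$. Then one runs the obstinate kernel method: factor the kernel, identify the finite group of birational substitutions fixing it, sweep through the orbit of a well-chosen substitution, and take the linear combination of the resulting equations that cancels the unknown specializations, thereby solving for $F$ and, in particular, for $\sum_{n\ge1}|\A_n(210)|\,t^n$. It remains to recognize this series as the one Bousquet-M\'elou and Xin found for $\{C_3(n)\}$, or, equivalently, to verify that $|\A_n(210)|$ satisfies their P-recurrence.

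I expect the kernel computation to be the genuine obstacle: choosing a kernel whose symmetry group is large enough, controlling the orbit it generates, and then reconciling the closed form that comes out with the $C_3$ generating function, which is only D-finite. As an alternative one could reprove the theorem combinatorially, in the spirit of Yan~\cite{yan}: through the bijection of Chen, Deng, Du, Stanley and Yan, $C_3(n)$ counts vacillating tableaux of height at most $2$, which via growth diagrams for $01$-fillings of Ferrers shapes transport to fillings whose ``longest strictly decreasing chain has length $\le2$'' condition mirrors the $210$-avoidance of ascent sequences; here the work is to design the encoding of ascent sequences as such fillings so that every relevant statistic matches up.
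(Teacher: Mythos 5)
There is a genuine gap: your proposal is a program rather than a proof, and the one combinatorial step you do carry out is wrong. Appending $j$ to a $210$-avoiding ascent sequence $x$ creates a $210$ pattern exactly when some $x_k$ with a strictly larger entry before it satisfies $x_k>j$; hence the correct condition for preserving $210$-avoidance is $j\ge \max S$ where $S=\{x_k: x_i>x_k \text{ for some } i<k\}$, not $j\ge\min S$ as you claim. For example, $x=012031\in\A_6(210)$ has $S=\{0,1\}$, and appending $0\ge\min S$ produces $0120310$, which contains the $210$ pattern $2,1,0$ in positions $3,6,7$. Moreover $\max S$ is \emph{not} frozen after the first descent (it grew from $0$ to $1$ in this example), so your claim that only a bounded flag plus $\asc(x)$ and $x_n$ need be tracked does not stand as argued. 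Beyond that, the succession rule, the functional equation, the kernel, its symmetry group, and the final identification with the $C_3$ series are all left as intentions; you yourself flag the kernel computation as ``the genuine obstacle,'' and there is no evidence the kernel for $\A_n(210)$ admits a workable finite orbit. The fallback suggestion (growth diagrams for $01$-fillings) is essentially a pointer to Yan's original proof rather than a proof.

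The paper's route is entirely different and bypasses all of this. It first observes that every $x\in\A_{n+1}(210)$ factors uniquely as $\tilde{x}_1^{c_1}\cdots\tilde{x}_{i+1}^{c_{i+1}}$ with $\tilde{x}\in\PA_{i+1}(210)$ primitive and $(c_1,\ldots,c_{i+1})$ a composition of $n+1$, giving $|\A_{n+1}(210)|=\sum_{i=0}^n\binom{n}{i}|\PA_{i+1}(210)|$. Combined with Yan's bijection $\phi:\PA_{i+1}(210)\to\I_i(\geq,\geq,-)$ and the now-proved Conjecture~\ref{conj:yan}, this reduces Theorem~\ref{thm:yan} to the identity $C_3(n+1)=\sum_{i=0}^n\binom{n}{i}E_3(i)$, which is then verified by showing that the binomial transform $z\mathcal{E}(z)$ with $z=t/(1-t)$ satisfies the second-order differential equation that Bousquet-M\'elou and Xin established for $\sum_n C_3(n)t^n$. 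If you want to salvage your approach, the honest comparison is that the generating tree and kernel method are applied in the paper only to the primitive/inversion-sequence side (Sections~\ref{sec:tree} and~\ref{sec:proof}), where the parameters $(p,q)$ of Lemma~\ref{lem:cross} give a clean two-label rewriting rule; attempting the same directly on $\A_n(210)$ is both unverified and unnecessary.
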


In the course of her combinatorial proof to Theorem~\ref{thm:yan}, she also showed that the mapping $\phi:\PA_{n+1}(210)\rightarrow\I_n(\geq,\geq,-)$ defined for each $x\in\PA_{n+1}(210)$ by
$$
\phi(x)=(e_1,e_2,\ldots,e_n),\,\,\text{where $e_i=i-1+x_{i+1}-\asc(x_1x_2\cdots x_{i+1})$},
$$   is a bijection. Therefore, Conjecture~\ref{conj:yan} is equivalent to $|\PA_{n+1}(210)|=E_3(n)$, as was originally suggested in~\cite[Remark~3.6]{yan}. 

 The rest of this paper is laid out as follows. In section~\ref{sec:tree}, we develop the generating tree for $\cup_{n\geq1}\I_n(\geq,\geq,-)$ and obtain a resulting functional equation. In Section~\ref{sec:proof}, we solve this functional equation via the obstinate kernel method~\cite{bo}  and then apply the Lagrange inversion formula and Zeilberger's algorithm to finish the proof of Conjecture~\ref{conj:yan}. In Section~\ref{sec:yan},  we show that how Conjecture~\ref{conj:yan} together with the results in~\cite{bx} would provide an alternative approach to Theorem~\ref{thm:yan}. An extension of~\eqref{eq:3cros} to $k$-noncrossing partitions is also conjectured. 
In Section~\ref{sec:aw}, we apply similar technique as in section~\ref{sec:tree} to enumerate      another interesting class of restricted inversion sequences introduced by Adams-Watters~\cite{aw}. It is surprising that the resulting functional equation is difficult enough that  we do not know how to solve it. 
Finally, we conclude the paper with some further remarks. 

\section{The generating tree for $(\geq,\geq,-)$-avoiding inversion sequences}
\label{sec:tree}

A {\em left-to-right maximum} of an inversion sequence $(e_1,e_2,\ldots,e_n)$ is an entry $e_i$ satisfying $e_i>e_j$ for any $j<i$. Similar to $321$-avoiding permutations,  $(\geq,\geq,-)$-avoiding inversion sequences have the following important characterization  proved by Martinez--Savage~\cite{ms}.

\begin{proposition}[See~\cite{ms}, Observation~7]\label{decr} An inversion sequence is  $(\geq,\geq,-)$-avoiding if and only if both the subsequence formed by its left-to-right maximum and the one formed by the remaining entries are strictly increasing. 
\end{proposition}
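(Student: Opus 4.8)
The statement to prove is Proposition~\ref{decr}: an inversion sequence is $(\geq,\geq,-)$-avoiding if and only if both the subsequence of left-to-right maxima and the subsequence of the remaining (non-left-to-right-maximum) entries are strictly increasing.

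\medskip

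The plan is to prove the biconditional by establishing the contrapositive in the forward direction and a direct argument in the reverse direction. First I would set up notation: write $e = (e_1, \dots, e_n)$, let $M = \{i : e_i \text{ is a left-to-right maximum}\}$ be the set of positions of left-to-right maxima, and let $N = [n] \setminus M$ be the complementary positions. Note immediately that the subsequence indexed by $M$ is automatically strictly increasing by the very definition of a left-to-right maximum (each such entry strictly exceeds all earlier entries, in particular all earlier left-to-right maxima). So the only content on the $M$-side is vacuous, and the real work concerns the $N$-side together with how $M$-entries and $N$-entries interact.

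\medskip

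For the reverse direction (both subsequences strictly increasing $\Rightarrow$ $(\geq,\geq,-)$-avoiding), I would argue by contradiction: suppose there exist $i < j < k$ with $e_i \geq e_j \geq e_k$. Among the three positions, at most one can lie in $M$, because two positions in $M$ would give a strict increase among left-to-right maxima, contradicting $e_i \ge e_j \ge e_k$ on that pair; similarly one checks that if two of the three are in $N$ they must be strictly increasing, again a contradiction. The key sub-claim is: if $i \in M$ and $j \notin M$ with $i<j$, then $e_i > e_j$ is \emph{not} forced — but if $i \notin M$ then there is an earlier left-to-right maximum position $p < i$ with $e_p > e_i$, and I can use such witnesses to push any bad triple entirely into $N$ or entirely into $M$. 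Concretely: given a weakly decreasing triple $e_i \ge e_j \ge e_k$, if $i \in M$ then since $e_j \le e_i$ and $j$ comes after $i$, position $j$ has an earlier entry ($e_i$) that is $\ge e_j$; I would show one can replace positions appropriately to obtain two positions in $N$ that are weakly decreasing (in fact I would reduce to showing $j$ and $k$ are both in $N$ and $e_j \ge e_k$, contradicting strict increase on $N$). The cleanest route: show directly that any two positions $j < k$ with $e_j \ge e_k$ must both be in $N$, then strict monotonicity of the $N$-subsequence is violated. Indeed $k \notin M$ because $e_j \ge e_k$ with $j < k$; and if $j \in M$ but $k \notin M$, the entry $e_k$ being a non-maximum has some earlier witness, and combined with... — this is exactly where I expect the main obstacle, see below.

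\medskip

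For the forward direction ($(\geq,\geq,-)$-avoiding $\Rightarrow$ both subsequences strictly increasing), the $M$-part is free, so I only need the $N$-subsequence strictly increasing. Suppose not: there are $j < k$ both in $N$ with $e_j \ge e_k$. Since $j \in N$, it is not a left-to-right maximum, so there is a position $i < j$ with $e_i \ge e_j$ (one can even take $e_i$ to be a left-to-right maximum, hence $e_i > e_j$). Then $i < j < k$ and $e_i \ge e_j \ge e_k$, a forbidden pattern — contradiction. Hence $N$ is strictly increasing, completing this direction.

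\medskip

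The step I expect to be the genuine crux is, in the reverse direction, the interaction case where a bad triple has one position in $M$ and the argument that one can always extract a bad \emph{pair} lying entirely within $N$ (equivalently, that a weakly decreasing pair $e_j \ge e_k$ with $j < k$ forces both $j, k \in N$). The subtlety is that $j$ could a priori be a left-to-right maximum while $e_j \ge e_k$; I would resolve this by noting that then $k$ is definitely not a left-to-right maximum (it is $\le e_j$), so $k \in N$, and then I need another $N$-position below $k$ weakly below $e_k$ — but $k \in N$ means there \emph{is} an earlier entry $\ge e_k$; if the earliest such witness is itself a left-to-right maximum $e_p$ with $p < k$, I still only have $p \in M$, not $N$. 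The fix is to instead run the whole argument on the contrapositive of the forward direction's structure: rather than producing a bad pair in $N$, directly produce the forbidden triple $i<j<k$ from the assumed non-strict-increase, exactly as in the forward direction above — i.e. the reverse direction is really proved by contraposition of the \emph{same} mechanism. So in the write-up I would fold both directions into: "$e$ has a forbidden pattern $\iff$ the $N$-subsequence fails to be strictly increasing," proving $\Leftarrow$ by the witness-extraction just described and $\Rightarrow$ by taking $i<j$ the first two indices of a forbidden triple and observing $j \in N$ (since $e_i \ge e_j$ with $i<j$ means $e_j$ is not a left-to-right maximum) and likewise $k \in N$, so $e_j \ge e_k$ breaks strict increase on $N$. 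This makes the $M$-side a triviality and isolates all the real content in one clean lemma about $N$.
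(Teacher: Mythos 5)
Your proof is correct. The paper itself gives no proof of this proposition (it is cited from Martinez--Savage, Observation~7), so there is nothing to compare against, but your final formulation is exactly the natural argument: the left-to-right-maxima subsequence is strictly increasing by definition, and a forbidden triple $i<j<k$ with $e_i\ge e_j\ge e_k$ exists if and only if the complementary subsequence fails to be strictly increasing --- the backward implication by extracting a witness $i<j$ with $e_i\ge e_j$ from the fact that $j$ is not a left-to-right maximum, and the forward implication because $e_i\ge e_j$ and $e_j\ge e_k$ force both $j$ and $k$ to be non-maxima. One remark on the write-up: the long middle passage agonizing over the ``crux'' of extracting a bad pair in $N$ from a bad triple is unnecessary, since (as you eventually observe) the \emph{last two} positions of any weakly decreasing triple are automatically non-maxima; you should cut that detour and present only the clean two-line version at the end.
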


For each $e\in\I_n(\geq,\geq,-)$, introduce the {\em parameters} $(p,q)$ of $e$, where 
$$p=\alpha(e)-\beta(e) \quad{ and } \quad q=n-\alpha(e)
$$ with $\alpha(e)=\max\{e_1,e_2,\ldots,e_n\}$ and $\beta(e)$ is the greatest integer in the set
 $$\{e_i: e_i\text{ is not a left-to-right maximum}\}\cup\{-1\}.$$ 
 For example, the parameters of $(0,1,2)$ is $(3,1)$, while the parameters of $(0,1,1)$ is $(0,2)$.
We have the following rewriting rule for $(\geq,\geq,-)$-avoiding inversion sequences.

\begin{lemma}\label{lem:cross}
Let $e\in\I_n(\geq,\geq,-)$ be an inversion sequence with parameters $(p,q)$. Exactly $p+q$ inversion sequences in $\I_{n+1}(\geq,\geq,-)$ when removing their last entries will become $e$, and their parameters are respectively:
  \begin{align*}
 &(p-1,q+1), (p-2,q+1),\ldots, (0,q+1)\\
&(p+1,q), (p+2,q-1),\ldots, (p+q,1).
 \end{align*}
The order in which the parameters are listed corresponds to the inversion sequences with last entries from $\beta(e)+1$ to $n$. 
\end{lemma}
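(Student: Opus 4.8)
The plan is to use the structural characterization in Proposition~\ref{decr} to decide exactly which entries may be appended to $e=(e_1,\ldots,e_n)$ so that the resulting sequence lies in $\I_{n+1}(\geq,\geq,-)$, and then to read off the new parameters in each case. Write $\alpha=\alpha(e)$ and $\beta=\beta(e)$, so that $p=\alpha-\beta$ and $q=n-\alpha$; note $\alpha\le n-1$ forces $q\ge1$ and $\beta\ge-1$ forces $p\ge0$. Appending a value $c$ with $0\le c\le n$ produces $e'=(e_1,\ldots,e_n,c)\in\I_{n+1}$; since $e$ already avoids $(\geq,\geq,-)$, the sequence $e'$ can fail to avoid it only through a forbidden triple ending at position $n+1$, so by Proposition~\ref{decr} it suffices to control the subsequence of non-left-to-right maxima of $e'$.

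First I would treat the case $c>\alpha$. Here $c$ is a left-to-right maximum of $e'$, the non-left-to-right maxima of $e'$ coincide with those of $e$, and hence $e'\in\I_{n+1}(\geq,\geq,-)$ automatically; the admissible values are exactly $c\in\{\alpha+1,\ldots,n\}$, which is $q$ of them. For such $e'$ one has $\alpha(e')=c$ and $\beta(e')=\beta$, so $(p',q')=(c-\beta,\,n+1-c)$; letting $c$ run from $\alpha+1$ to $n$ produces precisely $(p+1,q),(p+2,q-1),\ldots,(p+q,1)$.

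Next I would treat the case $c\le\alpha$. Now $c$ is not a left-to-right maximum of $e'$, so it is appended to the (strictly increasing) subsequence of non-left-to-right maxima of $e$; by Proposition~\ref{decr}, $e'$ is $(\geq,\geq,-)$-avoiding if and only if $c>\beta$, i.e.\ $c\in\{\beta+1,\ldots,\alpha\}$, which is $p$ values (this also covers the degenerate case $\beta=-1$, where $e$ is strictly increasing). For such $e'$ one has $\alpha(e')=\alpha$ and $\beta(e')=c$, so $(p',q')=(\alpha-c,\,q+1)$; letting $c$ run from $\beta+1$ to $\alpha$ produces precisely $(p-1,q+1),(p-2,q+1),\ldots,(0,q+1)$.

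Combining the two cases gives $p+q$ valid extensions with the stated parameters; moreover the two ranges of $c$ together sweep out $\{\beta(e)+1,\ldots,n\}$ in increasing order, the block $\{\beta+1,\ldots,\alpha\}$ coming first and yielding the first listed row and the block $\{\alpha+1,\ldots,n\}$ coming second and yielding the second, which matches the claimed ordering. The only point requiring real care is the application of Proposition~\ref{decr}: checking that no forbidden triple is created when $c>\alpha$, and that $c>\beta$ is simultaneously necessary and sufficient when $c\le\alpha$; the parameter bookkeeping and the edge case $\beta=-1$ are then routine.
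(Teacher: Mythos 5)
Your proof is correct and follows essentially the same route as the paper: appeal to Proposition~\ref{decr} to show that the admissible appended values are exactly $b$ with $\beta(e)<b\leq n$, then split into the two cases $b\leq\alpha(e)$ and $b>\alpha(e)$ and track how $\alpha$ and $\beta$ change. The only difference is that you spell out the verification of the avoidance criterion and the edge cases ($\beta=-1$, $p=0$) in more detail than the paper does.
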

\begin{proof}
In view of Proposition~\ref{decr},  the vector $f:=(e_1,e_2,\ldots,e_n,b)$ is an inversion sequence in $\I_{n+1}(\geq,\geq,-)$ if and only if $\beta(e)<b\leq n$. We distinguish  two cases: 
\begin{itemize}
\item If $\beta(e)<b\leq\alpha(e)$, then $\alpha(f)=\alpha(e)$ and $\beta(f)=b$. These contribute the parameters $(p-1,q+1), (p-2,q+1),\ldots, (0,q+1)$. 
\item If $\alpha(e)<b\leq n$, then $\alpha(f)=b$ and $\beta(f)=\beta(e)$. This case contributes the parameters $(p+1,q), (p+2,q-1),\ldots, (p+q,1)$.
\end{itemize}
These two cases together give the rewriting rule for $(\geq,\geq,-)$-avoiding inversion sequences. Note that $p=\alpha(e)-\beta(e)$ may be $0$, i.e. $\alpha(e)=\beta(e)$, and in this situation the first case is empty. 
\end{proof}

Using the above lemma, we construct a {\em generating tree} (actually an infinite rooted tree) for $(\geq,\geq,-)$-avoiding inversion sequences by representing each element as its parameters as follows: the root is $(1,1)$ and the children of a vertex labelled $(p,q)$ are those generated according to the  rewriting rule in Lemma~\ref{lem:cross}. See Fig.~\ref{tree} for the first few levels of this generating tree. Note that the number of vertices at the $n$-th level of this tree is the cardinality of $\I_n(\geq,\geq,-)$.

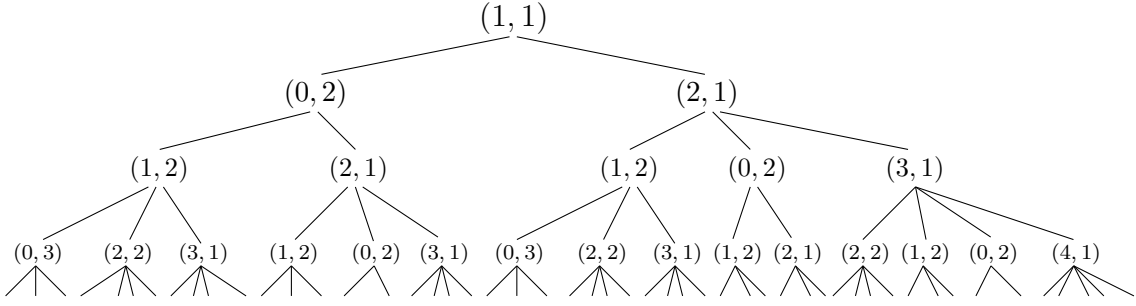
\begin{figure}
\setlength{\unitlength}{1mm}
\begin{picture}(120,38)\setlength{\unitlength}{1mm}
\thinlines
\put(50,35){$(1,1)$}
\put(54,33.5){\line(-5,-1){25}}\put(55,33.5){\line(5,-1){25}}
\put(24,25){\small{$(0,2)$}}\put(76,25){\small{$(2,1)$}}
\put(27.5,23.5){\line(-4,-1){20}}\put(28.5,23.5){\line(1,-1){5}}
\put(3.5,15){\footnotesize{$(1,2)$}}\put(30,15){\footnotesize{$(2,1)$}}
\put(6,13.5){\line(-2,-1){14}}\put(7,13.5){\line(-1,-2){3.5}}\put(8,13.5){\line(1,-1.5){5}}
\put(-12,4){\tiny{$(0,3)$}}\put(0,4){\tiny{$(2,2)$}}\put(10,4){\tiny{$(3,1)$}}

\put(-9,3){\line(-1,-1){4}}\put(-9,3){\line(0,-1){4}}\put(-9,3){\line(1,-1){4}}
\put(3,3){\line(-1,-4){1}}\put(3,3){\line(-3,-2){6}}\put(3,3){\line(1,-4){1}}\put(3,3){\line(1,-1){4}}
\put(13,3){\line(-1,-1){4}}\put(13,3){\line(-1,-4){1}}\put(13,3){\line(1,-4){1}}\put(13,3){\line(3,-2){6}}

\put(32.5,13.5){\line(-1,-1){7}}\put(33.5,13.5){\line(1,-3){2.4}}\put(34.5,13.5){\line(3,-2){10}}
\put(22,4){\tiny{$(1,2)$}}\put(33,4){\tiny{$(0,2)$}}\put(42,4){\tiny{$(3,1)$}}

\put(25,3){\line(-1,-1){4}}\put(25,3){\line(0,-1){4}}\put(25,3){\line(1,-1){4}}
\put(36,3){\line(-1,-1){4}}\put(36,3){\line(1,-2){2}}
\put(45,3){\line(-1,-4){1}}\put(45,3){\line(-1,-1){4}}\put(45,3){\line(1,-4){1}}\put(45,3){\line(1,-1){4}}

\put(80,23.5){\line(-2,-1){10}}\put(81,23.5){\line(1,-1){5}}\put(82,23.5){\line(5,-1){25}}
\put(66,15){\footnotesize{$(1,2)$}}\put(83,15){\footnotesize{$(0,2)$}}\put(104,15){\footnotesize{$(3,1)$}}

\put(69,13.5){\line(-2,-1){14}}\put(70,13.5){\line(-1,-2){3.5}}\put(71,13.5){\line(1,-1.5){5}}
\put(52,4){\tiny{$(0,3)$}}\put(63,4){\tiny{$(2,2)$}}\put(73,4){\tiny{$(3,1)$}}

\put(55,3){\line(-1,-1){4}}\put(55,3){\line(0,-1){4}}\put(55,3){\line(1,-1){4}}
\put(66,3){\line(-1,-4){1}}\put(66,3){\line(-1,-1){4}}\put(66,3){\line(1,-4){1}}\put(66,3){\line(1,-1){4}}
\put(76,3){\line(-1,-4){1}}\put(76,3){\line(-1,-1){4}}\put(76,3){\line(1,-4){1}}\put(76,3){\line(1,-1){4}}

\put(86,13.5){\line(-1,-3){2.3}}\put(87,13.5){\line(2,-3){4.5}}
\put(81,4){\tiny{$(1,2)$}}\put(89,4){\tiny{$(2,1)$}}

\put(84,3){\line(-1,-2){2}}\put(84,3){\line(1,-2){2}}\put(84,3){\line(1,-1){4}}
\put(92,3){\line(-1,-2){2}}\put(92,3){\line(1,-2){2}}\put(92,3){\line(1,-1){4}}

\put(108,13.5){\line(-1,-1){7}}\put(108,13.5){\line(0.2,-1){1.4}}\put(108,13.5){\line(1.5,-1){10}}\put(108,13.5){\line(3,-1){21}}
\put(98,4){\tiny{$(2,2)$}}\put(106,4){\tiny{$(1,2)$}}\put(115,4){\tiny{$(0,2)$}}\put(126,4){\tiny{$(4,1)$}}

\put(101,3){\line(-1,-4){1}}\put(101,3){\line(-1,-1){4}}\put(101,3){\line(1,-4){1}}\put(101,3){\line(1,-1){4}}
\put(109,3){\line(-1,-2){2}}\put(109,3){\line(1,-2){2}}\put(109,3){\line(1,-1){4}}
\put(118,3){\line(1,-1){4}}\put(118,3){\line(-1,-2){2}}
\put(129,3){\line(-1,-1){4}}\put(129,3){\line(1,-2){2}}\put(129,3){\line(-1,-2){2}}\put(129,3){\line(1,-1){4}}\put(129,3){\line(2,-1){8}}
\end{picture}
\caption{First few levels of the generating tree for $\cup_{n\geq1}\I_n(\geq,\geq,-)$.}
\label{tree}
\end{figure}

Define the formal power series $E(t;u,v)=E(u,v):=\sum_{p\geq0,q\geq1}E_{p,q}(t)u^pv^q$, where $E_{p,q}(t)$ is the size generating function for the $(\geq,\geq,-)$-inversion sequences with parameters $(p,q)$. We can turn this generating tree into a functional equation as follows. 
\begin{proposition}We have the following functional equation for $E(u,v)$:
\begin{equation}\label{eq:cross}
\biggl(1+\frac{tv}{1-u}+\frac{tv}{1-v/u}\biggr)E(u,v)=tuv+\frac{tv}{1-u}E(1,v)+\frac{tv}{1-v/u}E(u,u).
\end{equation}
\end{proposition}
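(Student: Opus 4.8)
The plan is to translate the generating tree directly into the functional equation by summing the contributions of the rewriting rule in Lemma~\ref{lem:cross} over all vertices of the tree. First I would note that each vertex $e\in\I_n(\geq,\geq,-)$ with parameters $(p,q)$ is recorded in $E(u,v)$ by a monomial $t^{n}u^{p}v^{q}$ (summed with appropriate multiplicities), so that $E(u,v)=tuv+\sum_{n\geq2}(\cdots)$ where the root contributes $t^{1}u^{1}v^{1}$. The heart of the matter is: applying one step of the rewriting rule multiplies the size generating function weight by an extra factor of $t$ and replaces the single monomial $t^{n}u^{p}v^{q}$ by the sum over all children, i.e. by
\[
t^{n+1}\Bigl(\sum_{i=0}^{p-1}u^{i}v^{q+1}+\sum_{j=1}^{q}u^{p+j}v^{q+1-j}\Bigr).
\]
Hence $E(u,v)-tuv=t\cdot\Theta\bigl(E(u,v)\bigr)$, where $\Theta$ is the linear operator on power series that sends $u^{p}v^{q}$ to the displayed bracketed polynomial. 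The whole proof amounts to computing $\Theta$ in closed form and rearranging.

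Next I would evaluate $\Theta$ on a general monomial by summing the two geometric-type series. The first sum is $v^{q+1}\sum_{i=0}^{p-1}u^{i}=v^{q+1}\frac{1-u^{p}}{1-u}=\frac{v}{1-u}\bigl(v^{q}-u^{p}v^{q}\bigr)$; the second sum is $v^{q+1}\sum_{j=1}^{q}(u/v)^{p+j} \cdot (u/v)^{-p}$—more cleanly, $\sum_{j=1}^{q}u^{p+j}v^{q+1-j}=u^{p}v^{q+1}\sum_{j=1}^{q}(u/v)^{j}=\frac{uv}{ \,v-u\,}\bigl(u^{p}v^{q}-u^{p+q}\bigr)$, which I would rewrite with denominator $1-v/u$ to match the target. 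Summing these two closed forms over all $(p,q)$ appearing in $E$ replaces $\sum u^{p}v^{q}$ by $E(u,v)$, replaces $\sum u^{p}v^{q}$ restricted to the substitution $u\mapsto 1$ (coming from the $v^{q}$ term with no $u$) by $E(1,v)$, and replaces the term $\sum u^{p+q}=\sum u^{p}u^{q}$—obtained by setting $v\mapsto u$—by $E(u,u)$. The remaining two terms are $-\frac{v}{1-u}u^{p}v^{q}$ and $-\frac{u}{v-u}\cdot(\text{its }v^q u^p\text{ part})$, which reassemble into $-\bigl(\frac{tv}{1-u}+\frac{tv}{1-v/u}\bigr)E(u,v)$ after multiplying by $t$; moving these to the left-hand side produces exactly the kernel $1+\frac{tv}{1-u}+\frac{tv}{1-v/u}$ multiplying $E(u,v)$.

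I would then collect: multiplying the identity $E(u,v)-tuv=t\,\Theta(E(u,v))$ through and sorting terms gives
\[
\Bigl(1+\frac{tv}{1-u}+\frac{tv}{1-v/u}\Bigr)E(u,v)=tuv+\frac{tv}{1-u}E(1,v)+\frac{tv}{1-v/u}E(u,u),
\]
which is \eqref{eq:cross}. A small amount of care is needed to confirm the bijective substitutions ``$u\mapsto 1$'' and ``$v\mapsto u$'' are legitimate at the level of formal power series in $t$ (each coefficient of $t^{n}$ is a genuine Laurent polynomial, or rather polynomial, in $u,v$, so specialization is fine), and to check the edge case $p=0$: the first sum is then empty, and the closed form $\frac{v}{1-u}(v^{q}-u^{p}v^{q})$ correctly vanishes, so no separate accounting is required.

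The main obstacle I anticipate is purely bookkeeping: getting the second geometric sum into precisely the shape $\frac{tv}{1-v/u}$ with the correct companion term $\frac{tv}{1-v/u}E(u,u)$, since the naive summation index produces factors of $u$ and $v$ that must be shuffled between numerator and denominator, and an error of one power of $u$ here would propagate into a wrong kernel. I would double-check the whole derivation against the first few levels of the tree in Fig.~\ref{tree}, verifying that the coefficients of $t^{2}$ and $t^{3}$ in the functional equation reproduce the vertices $(0,2),(2,1)$ at level $2$ and the seven vertices at level $3$.
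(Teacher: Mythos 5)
Your proposal is correct and follows essentially the same route as the paper: sum the rewriting rule of Lemma~\ref{lem:cross} over all vertices of the generating tree, evaluate the two geometric sums in closed form (yielding the terms $\frac{tv}{1-u}\bigl(E(1,v)-E(u,v)\bigr)$ and $\frac{tuv}{v-u}\bigl(E(u,v)-E(u,u)\bigr)$), and rearrange to isolate the kernel. Your extra checks (the $p=0$ edge case and the sign bookkeeping in passing from $\frac{uv}{v-u}$ to $\frac{v}{1-v/u}$) are exactly the points where care is needed, and they work out as you describe.
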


\begin{proof}
In the generating tree for $\cup_{n\geq1}\I_n(\geq,\geq,-)$, each vertex other than the root $(1,1)$ can be generated by a unique parent. Thus, we have 
\begin{align*}
E(u,v)&=tuv+t\sum_{p\geq0,q\geq1}E_{p,q}(t)\biggl(v^{q+1}\sum_{i=0}^{p-1}u^i+\sum_{i=0}^{q-1}u^{p+1+i}v^{q-i}\biggr)\\
&=tuv+t\sum_{p\geq0,q\geq1}E_{p,q}(t)\biggl(\frac{1-u^p}{1-u}v^{q+1}+u^{p+1}v^q\frac{1-(u/v)^q}{1-u/v}\biggr)\\
&=tuv+\frac{tv}{1-u}(E(1,v)-E(u,v))+\frac{tuv}{v-u}(E(u,v)-E(u,u)),
\end{align*}
which is equivalent to~\eqref{eq:cross}.
\end{proof}

\begin{remark}
It should be noted that the kernel $1+\frac{tv}{1-u}+\frac{tv}{1-v/u}$ of~\eqref{eq:cross} is exactly the same as that of the functional equation for {\em Baxter inversion sequences} in~\cite[Proposition~4.4.]{kl}. 
\end{remark}

\section{Proof of Conjecture~\ref{conj:yan}}
\label{sec:proof}

In this section, we will prove Conjecture~\ref{conj:yan} by solving~\eqref{eq:cross}. It is convenient to set $v=uw$ in~\eqref{eq:cross}. The equation then becomes
$$
\biggl(1+\frac{tuw}{1-u}+\frac{tuw}{1-w}\biggr)E(u,wu)=tu^2w+\frac{tuw}{1-u}E(1,uw)+\frac{tuw}{1-w}E(u,u).
$$
Further setting $u=1+x$ and $w=1+y$ above we get
\begin{multline}\label{eq2:cross}
\frac{xy-t(1+x)(1+y)(x+y)}{t(1+x)(1+y)}E(1+x,(1+x)(1+y))\\
=xy(1+x)-yE(1,(1+x)(1+y))-\widetilde{E}(x),
\end{multline}
where $\widetilde{E}(x)=xE(1+x,1+x)$. We are going to apply the {\em obstinate kernel method} developed by Bousquet-M\'elou~\cite{bo} to this equation. The numerator 
$$
K(x,y)=xy-t(1+x)(1+y)(x+y) 
$$
of the coefficient of $E(1+x,(1+x)(1+y))$ in~\eqref{eq2:cross} is called the {\em kernel} of~\eqref{eq2:cross}.

Observe that $K(x,y)$ is also the kernel of the functional equation in~\cite[Corollary~3]{bo} for {\em Baxter permutations}.
It was shown in~\cite[Figure~3]{bo} that the three pairs $(x,Y), (\bar{x}Y,Y)$ and $(\bar{x}Y,\bar{x})$ are roots of the kernel $K(x,y)$ and can be legally substituted for $(x,y)$ in~\eqref{eq2:cross}, where 
$$
\bar{x}:=\frac{1}{x}\quad\text{and}\quad Y=\frac{1-t(1+x)(1+\bar{x})-\sqrt{1-2t(1+x)(1+\bar{x})-t^2(1-x^2)(1-\bar{x}^2)}}{2t(1+\bar{x})}.
$$
Note that the  kernel $K(x,y)$ is symmetric in $x$ and $y$ and  so the dual pairs $(Y,x), (Y,\bar{x}Y)$ and $(\bar{x},\bar{x}Y)$ are also roots of  $K(x,y)$ which can be legally substituted for $(x,y)$ in~\eqref{eq2:cross}. Substituting the pairs $(x,Y)$ and $(Y,x)$ for $(x,y)$ in~\eqref{eq2:cross} yields 
$$
\begin{cases}
\,\,xY(1+x)-YE(1,(1+x)(1+Y))-\widetilde{E}(x)=0,
\\
\,\,Yx(1+Y)-xE(1,(1+x)(1+Y))-\widetilde{E}(Y)=0.
\end{cases}
$$
Eliminating $E(1,(1+x)(1+Y))$ we get 
\begin{equation}\label{eq:1}
Y\widetilde{E}(Y)-x\widetilde{E}(x)=xY(Y(1+Y)-x(1+x)).
\end{equation}
Similarly, substitute $(\bar{x}Y,Y),(Y,\bar{x}Y)$ and $(\bar{x}Y,\bar{x}),(\bar{x},\bar{x}Y)$ into~\eqref{eq2:cross} and after some computation we get two equations, which together with~\eqref{eq:1} give the system of equations:
$$
\begin{cases}
\,\,Y\widetilde{E}(Y)-x\widetilde{E}(x)=xY(Y(1+Y)-x(1+x)),
\\
\,\,Y\widetilde{E}(Y)-\bar{x}Y\widetilde{E}(\bar{x}Y)=\bar{x}Y^2(Y(1+Y)-\bar{x}Y(1+\bar{x}Y)),
\\
\,\,\bar{x}\widetilde{E}(\bar{x})-\bar{x}Y\widetilde{E}(\bar{x}Y)=\bar{x}^2Y(\bar{x}(1+\bar{x})-\bar{x}Y(1+\bar{x}Y)).
\end{cases}
$$
By eliminating $\widetilde{E}(Y)$ and $\widetilde{E}(\bar{x}Y)$, we get a relation between $\widetilde{E}(x)$ and $\widetilde{E}(\bar{x})$:
\begin{equation}\label{eq:main}
\bar{x}\widetilde{E}(x)-\bar{x}^3\widetilde{E}(\bar{x})=R(x,Y), 
\end{equation}
where 
\begin{equation}\label{def:R}
R(x,Y)=Y(x+1-\bar{x}^{5}-\bar{x}^{6})+Y^2(\bar{x}^{5}-\bar{x})+Y^3(\bar{x}^{3}+\bar{x}^{6}-\bar{x}-\bar{x}^{4})+Y^4(\bar{x}^{3}-\bar{x}^{5})
\end{equation}
is a formal power series in $t$.
Since in the left-hand side of~\eqref{eq:main}: 
\begin{itemize}
\item $\bar{x}\widetilde{E}(x)=E(1+x,1+x)$  is a power series in $t$ with polynomial coefficient in $x$ 
\item and $\bar{x}^3\widetilde{E}(\bar{x})$ is a power series in $t$ with polynomial coefficient in $\bar{x}$ whose lowest power of $\bar{x}$ is $4$,
\end{itemize}
we have the following result.
\begin{theorem}
Let $Y=Y(t;x)$ be the unique formal power series in $t$ such that
\begin{equation}\label{eq:Y}
Y=t(1+\bar{x})(1+Y)(x+Y).
\end{equation}
The series solution $E(u,v)$ of~\eqref{eq:cross} satisfies 
\begin{equation}\label{eq:ER}
E(1+x,1+x)=\mathop{\PT}\limits_{x} R(x,Y),
\end{equation}
where $R(x,Y)$ is defined in~\eqref{def:R} and the operator $\mathop{\PT}\limits_{x}$ extracts non-negative powers of $x$ in series of $\Q[x,\bar{x}][[t]]$.
\end{theorem}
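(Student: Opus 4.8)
The plan is to obtain~\eqref{eq:ER} by applying the extraction operator $\PT_{x}$ to both sides of the relation~\eqref{eq:main}, which has already been established. Essentially all of the work sits in~\eqref{eq:main} itself; what remains is to confirm that $\PT_{x}$ can be applied legitimately and to identify the image of each term in the ring $\Q[x,\bar x][[t]]$.

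First I would check that $Y=Y(t;x)$ determined by~\eqref{eq:Y} is a well-defined element of $\Q[x,\bar x][[t]]$ and is unique. Writing $Y=\sum_{n\ge1}Y_{n}t^{n}$, the explicit factor $t$ on the right of $Y=t(1+\bar x)(1+Y)(x+Y)$ forces $Y$ to vanish at $t=0$ and makes the coefficient of $t^{n}$ on the right depend only on $Y_{1},\dots,Y_{n-1}$; hence the $Y_{n}\in\Q[x,\bar x]$ are determined recursively and uniquely. Multiplying~\eqref{eq:Y} by $x$ gives $xY=t(1+x)(1+Y)(x+Y)$, i.e.\ $K(x,Y)=0$, and since $Y$ has no constant term it is exactly the power-series branch of the kernel root that was substituted into~\eqref{eq2:cross} to produce~\eqref{eq:main}. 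As $R(x,Y)$ is a polynomial in $x$, $\bar x$ and $Y$, it lies in $\Q[x,\bar x][[t]]$, so each of its $t$-coefficients is a Laurent polynomial in $x$ and $\PT_{x}R(x,Y)$ is well defined.

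Next I would treat the left-hand side of~\eqref{eq:main}. Since $\PT_{x}$ is $\Q[[t]]$-linear on $\Q[x,\bar x][[t]]$, it suffices to compute its effect on $\bar x\widetilde E(x)$ and on $\bar x^{3}\widetilde E(\bar x)$ separately. Because $\bar x\widetilde E(x)=E(1+x,1+x)=\sum_{p\ge0,\,q\ge1}E_{p,q}(t)(1+x)^{p+q}$ involves only non-negative powers of $x$, we get $\PT_{x}\bigl(\bar x\widetilde E(x)\bigr)=E(1+x,1+x)$. On the other hand $\widetilde E(x)=xE(1+x,1+x)$ gives $\bar x^{3}\widetilde E(\bar x)=\bar x^{4}E(1+\bar x,1+\bar x)=\bar x^{4}\sum_{p\ge0,\,q\ge1}E_{p,q}(t)(1+\bar x)^{p+q}$, every monomial of which has $x$-degree at most $-4$; hence $\PT_{x}\bigl(\bar x^{3}\widetilde E(\bar x)\bigr)=0$. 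Applying $\PT_{x}$ to~\eqref{eq:main} and combining these two facts gives $E(1+x,1+x)=\PT_{x}R(x,Y)$, which is~\eqref{eq:ER}.

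I expect the only genuine obstacle to be the well-definedness underlying~\eqref{eq:main}: that the specializations $u=1+x$, $v=(1+x)(1+Y)$ and their duals really send $E(u,v)$ to elements of $\Q[x,\bar x][[t]]$, and that the eliminations of $E(1,(1+x)(1+Y))$, $\widetilde E(Y)$ and $\widetilde E(\bar x Y)$ involve no illegitimate division. The first is guaranteed by the generating tree of Section~\ref{sec:tree}: along each edge the quantity $p+q$ increases by at most $1$, so a vertex at level $n$ has $p+q\le n+1$, whence $E_{p,q}(t)$ has $t$-valuation at least $p+q-1$ and only finitely many pairs $(p,q)$ contribute to any fixed power of $t$; this keeps all of the specialized series in $\Q[x,\bar x][[t]]$ with $x$-degrees bounded at each $t$-order. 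Once this is in place, the two halves $\bar x\widetilde E(x)$ and $\bar x^{3}\widetilde E(\bar x)$ of the left-hand side of~\eqref{eq:main} occupy the complementary ranges of $x$-degrees $\ge0$ and $\le-4$, and the argument above goes through verbatim.
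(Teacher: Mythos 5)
Your proposal is correct and follows essentially the same route as the paper: the theorem is obtained by applying $\PT_x$ to~\eqref{eq:main}, noting that $\bar x\widetilde E(x)=E(1+x,1+x)$ involves only non-negative powers of $x$ while $\bar x^{3}\widetilde E(\bar x)$ involves only powers of $x$ at most $-4$. The extra checks you supply (uniqueness of the power-series root $Y$, and the valuation bound $p+q\le n+1$ from the generating tree guaranteeing that all substitutions are legitimate in $\Q[x,\bar x][[t]]$) are sound and merely make explicit what the paper leaves implicit.
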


Now we can apply the {\em Lagrange inversion formula} and {\em Zeilberger's algorithm} to finish the proof of Conjecture~\ref{conj:yan}. 
\begin{proof}[{\bf Proof of Conjecture~\ref{conj:yan}}]
Let $E(n)=|\I_n(\geq,\geq,-)|$. 
It follows from~\eqref{eq:ER} that 
\begin{multline}\label{eq:En}
E(n)=[x^{-1}t^n]Y+[x^0t^n]Y-[x^5t^n]Y-[x^6t^n]Y+[x^5t^n]Y^2-[x^1t^n]Y^2\\
+[x^3t^n]Y^3+[x^6t^n]Y^3-[x^1t^n]Y^3-[x^4t^n]Y^3+[x^3t^n]Y^4-[x^5t^n]Y^4.
\end{multline}
Applying the Lagrange inversion formula~\cite[Theorem~5.4.2]{st2}
to~\eqref{eq:Y} gives:
\begin{align*}
[x^mt^n]Y^k&=\frac{k}{n}[x^mt^{n-k}]((x+t)(1+t)(1+\bar{x}))^n\\
&=\frac{k}{n}\sum_{i=0}^{n-k}{n\choose i}{n\choose k+i}{n\choose m+i}
\end{align*}
for all $k,m\in\Z$ and $n\in\N$. Substituting this into~\eqref{eq:En} we can express $E(n)$ as $E(n)=\sum_{i=0}^{n-1}E(n,i)$, where 
\begin{multline*}
E(n,i)=\frac{1}{n}{n\choose i}\left\{{n\choose i+1}\left[{n+1\choose i}-{n+1\choose i+6}\right]+2{n\choose i+2}\left[{n\choose i+5}-{n\choose i+1}\right]\right.\\
\left.+3{n\choose i+3}\left[{n\choose i+3}+{n\choose i+6}-{n\choose i+1}-{n\choose i+4}\right]+4{n\choose i+4}\left[{n\choose i+3}-{n\choose i+5}\right]\right\}.
\end{multline*}
Applying Zeilberger's  algorithm~\cite{pwz} (or creative telescoping) with $E(n,i)$ above as input, the Maple package {\tt ZeilbergerRecurrence(E(n,i),n,i,E,0..n-1)} gives the P-recursion: for $n\geq1$,
\begin{equation}\label{recu:En}
a_nE(n)+b_nE(n+1)+c_nE(n+2)-d_nE(n+3)=0,
\end{equation}
where 
\begin{align*}
a_n&=8(3n+13)(n+3)(n+2)(n+1),\\
b_n&=3(n+3)(n+2)(15n^2+153n+376),\\
c_n&=6(n+7)(3n^3+38n^2+156^n+212),\\
d_n&=(3n+10)(n+9)(n+8)(n+7).
\end{align*}
The initial conditions are $E(1)=1, E(2)=2$ and $E(3)=5$.

On the other hand, Bousquet-M\'elou and Xin~\cite[Proposition~2]{bx} showed that the number $E_3(n)$ satisfies the P-recursion: $E_3(0)=E_3(1)=1$, and for $n\geq0$,
\begin{equation}\label{recu:E3}
8(n+3)(n+1)E_3(n)+(7n^2+53n+88)E_3(n+1)-(n+8)(n+7)E_3(n+2)=0.
\end{equation}
It is then routine to check that the sequence defined by the above three term recursion satisfies also the four term recursion in~\eqref{recu:En} obtained via Zeilberger's algorithm. More precisely, applying to~\eqref{recu:E3} the operator 
$$
(3n+13)(n+2)+(3n+10)(n+7)N,
$$
where $N$ is the shift operator replacing $n$ by $n+1$, yields a four term recursion for $E_3(n)$ which is exactly the same as that for $E(n)$ in~\eqref{recu:En}.  This completes the proof of Conjecture~\ref{conj:yan}, since both sequences share the same initial values.
\end{proof}

Since our proof of Conjecture~\ref{conj:yan} uses formal power series heavily, it is natural to ask for a bijective proof.

\section{A new approach to Yan's result and a conjecture}
\label{sec:yan}

Let $x=x_1x_2\cdots x_{n+1}$ be a $210$-avoiding ascent sequence of length $n+1$. It is apparent that the ascent sequence $x$ can be written uniquely as $\tilde{x}_1^{c_1} \tilde{x}_2^{c_2}\cdots \tilde{x}_{i+1}^{c_{i+1}}$, where $\tilde{x}:=\tilde{x}_1\tilde{x}_2\cdots\tilde{x}_{i+1}$ is a $210$-avoiding  primitive ascent sequence of length $i+1$ and $c_1+c_2+\cdots+c_{i+1}=n+1$ is a $(i+1)$-composition of $n+1$. For instance, the ascent sequence $0110212224\in\A_{10}(210)$ can be written as $0^11^20^12^11^12^34^1$, so that $\tilde{x}=0102124\in\PA_7(210)$ and the corresponding $7$-composition is $1+2+1+1+1+3+1=10$. Since the number of  $(i+1)$-composition of $n+1$ is ${n\choose i}$, the above decomposition gives the identity:
$$
|\A_{n+1}(210)|=\sum_{i=0}^n{n\choose i}|\PA_{i+1}(210)|=\sum_{i=0}^n{n\choose i}E_3(i),
$$
where the second equality follows from $|\PA_{i+1}(210)|=E_3(i)$ (by Conjecture~\ref{conj:yan}). Therefore, Theorem~\ref{thm:yan} is equivalent to identity~\eqref{eq:3cros}.  In the following, we will show how to deduce~\eqref{eq:3cros} from the results in~\cite{bx}, which provides a new approach to Theorem~\ref{thm:yan}.

Let $\mathcal{C}(t)=\sum_{n\geq1}\sum_{i=0}^{n-1}{n-1\choose i}E_3(i)t^n$ and $\mathcal{E}(t)=\sum_{n\geq0}E_3(n)t^n$. It then follows that  
\begin{equation}\label{eq:CE}
\mathcal{C}(t)=\sum_{i\geq0}E_3(i)t^{i+1}\sum_{m\geq0}{m+i\choose i}t^{m}=\sum_{i\geq0}E_3(i)\biggl(\frac{t}{1-t}\biggr)^{i+1}=z\mathcal{E}(z),
\end{equation}
where $z=\frac{t}{1-t}$. As was shown in~\cite[Proposition~2]{bx}, the generating function $\mathcal{E}(t)$ satisfies: 
$$
t^2(1+t)(1-8t)\frac{d^2}{dt^2}\mathcal{E}(t)+2t(6-23t-20t^2)\frac{d}{dt}\mathcal{E}(t)+6(5-7t-4t^2)\mathcal{E}(t)=30.
$$
Thus, if we denote $\mathcal{F}(t)=t\mathcal{E}(t)$, then 
\begin{equation}\label{eq:FF}
t^2(1+t)(1-8t)\frac{d^2}{dt^2}\mathcal{F}(t)+2t(5-16t-12t^2)\frac{d}{dt}\mathcal{F}(t)+(20-10t)\mathcal{F}(t)=30t.
\end{equation}
By~\eqref{eq:CE}, we have  $\mathcal{F}(t)=\mathcal{C}(x)$ with $x=\frac{t}{1+t}$. Substituting $\mathcal{F}(t)=\mathcal{C}(x)$ into~\eqref{eq:FF} and using the chain rule, we get 
$$
x^2(1-9x)(1-x)\frac{d^2}{dx^2}\mathcal{C}(x)+2x(5-27x+18x^2)\frac{d}{dx}\mathcal{C}(x)+10(2-3x)\mathcal{C}(x)=30x
$$
after some manipulation. 
Comparing with~\cite[Proposition~1]{bx} we conclude that 
$\mathcal{C}(t)=\sum_{n\geq1}C_3(n)t^n$, which is equivalent to~\eqref{eq:3cros}, as desired.

\subsection{Extension of~\eqref{eq:3cros} to $k$-noncrossing partitions: a conjecture}

\begin{figure}
\begin{center}
\begin{tikzpicture}
\SetVertexNormal
\SetGraphUnit{1.3}
\tikzset{VertexStyle/.append style={inner sep=0pt,minimum size=5mm}}
\Vertex{1}
\EA(1){2}\EA(2){3}\EA(3){4}\EA(4){5}\EA(5){6}\EA(6){7}\EA(7){8}
\tikzset{EdgeStyle/.append style = {bend left = 60}}
\red{\Edge(1)(3)\Edge(2)(5)}
\Edge(5)(6)
\Edge(3)(7)\Edge(6)(8)
\end{tikzpicture}
\end{center}
\caption{The arc diagram of $\{\{1,3,7\},\{2,5,6,8\},\{4\}\}$.\label{arc}}
\end{figure}
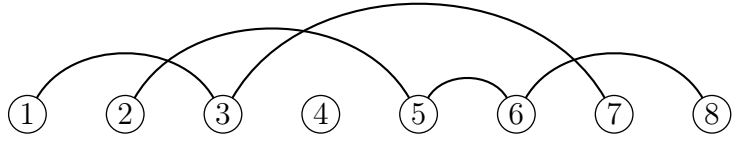

Any partition $P$ of $[n]$ can be identified with its {\em arc diagram} defined as follows:
\begin{itemize}
\item put the nodes $1,2,\ldots,n$ on a horizontal line in increasing order;
\item then draw an arc from $i$ to $j$, $i<j$, whenever $i$ and $j$ belong to a same block of $P$ and inside this block, there is not any $l$ satisfying $i<l<j$. 
\end{itemize}
See Fig.~\ref{arc} for the arc diagram of $\{\{1,3,7\},\{2,5,6,8\},\{4\}\}$. For any $k\geq2$, a {\em$k$-crossing} (resp.~an {\em enhanced $k$-crossing}) of  $P$ is a $k$-subset $(i_1,j_1), (i_2,j_2),\ldots, (i_k,j_k)$ of arcs in the arc diagram of $P$ such that 
$$
i_1<i_2<\cdots<i_k<j_1<j_2<\cdots<j_k\,\,\text{(resp.~$i_1<i_2<\cdots<\blue{i_k\leq j_1}<j_2<\cdots<j_k$)}.
$$
For instance, the partition in  Fig.~\ref{arc} has no $3$-crossing but contains one enhanced $3$-crossing, which is formed by the arcs $(1,3), (2,5), (3,7)$.

Let $C_k(n)$ (resp.~$E_k(n)$) be the number of partitions of $[n]$ avoiding classical (resp.~enhanced) $k$-crossings. It is known that $C_2(n)=C_n:=\frac{1}{n+1}{2n\choose n}$, the $n$th  {\em Catalan number}, and $E_2(n)=\sum\limits_{i=0}^{\lfloor n/2\rfloor}{n\choose 2i}C_i$ is the $n$th {\em Motzkin number}~\cite[Exercise~6.38]{st2}. The Catalan numbers are also related to Motzkin numbers by (cf.~\cite{deng})
\begin{equation} \label{ca:mz}
C_2(n+1)=\sum_{i=0}^n{n\choose i}E_2(i).
\end{equation}
In other words, the binomial transformation of Motzkin numbers are Catalan numbers. In view of  identities~\eqref{ca:mz} and~\eqref{eq:3cros}, the following conjecture is tempting.

\begin{conjecture}\label{conj:lin}
Fix $k\geq2$. The following identity holds:
\begin{equation}\label{eq:kimlin}
C_k(n+1)=\sum_{i=0}^n{n\choose i}E_k(i).
\end{equation}
\end{conjecture}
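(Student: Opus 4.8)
The plan is to parallel the argument that established \eqref{eq:3cros} in Section~\ref{sec:yan}, replacing the specific differential equations of Bousquet-M\'elou and Xin by the structural decomposition that underlies them. The key conceptual point is that identity \eqref{eq:kimlin} should be a \emph{combinatorial} consequence of the bijection of Chen, Deng, Du, Stanley and Yan~\cite{chen} between partitions and vacillating (classical case) or hesitating (enhanced case) tableaux. Concretely, I would set $\mathcal{C}_k(t)=\sum_{n\geq1}\bigl(\sum_{i=0}^{n-1}\binom{n-1}{i}E_k(i)\bigr)t^n$ and $\mathcal{E}_k(t)=\sum_{n\geq0}E_k(n)t^n$, and observe, exactly as in \eqref{eq:CE}, that $\mathcal{C}_k(t)=z\,\mathcal{E}_k(z)$ with $z=t/(1-t)$. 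So the conjecture is equivalent to the single substitution identity
\[
\sum_{n\geq1}C_k(n)t^n=\frac{t}{1-t}\,\mathcal{E}_k\!\left(\frac{t}{1-t}\right).
\]
The binomial-transform shape $z=t/(1-t)$ is the signature of the operation ``insert an arbitrary number of singleton-like blocks'', so the real task is to exhibit a size-preserving decomposition of a $k$-noncrossing partition of $[n+1]$ into (i) a distinguished element and (ii) a ``primitive'' enhanced-$k$-noncrossing structure on a subset, with the remaining elements freely interleaved.

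First I would pin down the right notion of \emph{primitive} object on the partition side. For $k=3$ this is, via Yan's chain of bijections, the $210$-avoiding primitive ascent sequences counted (conjecturally, now a theorem) by $E_3(i)$; more intrinsically, one wants a class of enhanced-$k$-noncrossing partitions with no ``repeated level'' in the associated tableau walk, so that an arbitrary enhanced-$k$-noncrossing partition is obtained by choosing a composition that records the multiplicities. The second step is to make precise the map from $C_k(n+1)$ to (composition, primitive $E_k$-object): I expect this to come from the vacillating-tableau description of $k$-noncrossing partitions, where a classical $k$-noncrossing partition of $[n+1]$ corresponds to a lattice walk of length $2(n+1)$ in the chamber of $k$ steps, and the ``do-nothing at even times'' flexibility of vacillating tableaux versus the hesitating tableaux for $E_k$ is exactly what produces the extra $\binom{n}{i}$ factor. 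Carrying this out for general $k$ is the crux.

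The main obstacle is precisely that no closed form or P-recursion for $C_k(n)$ or $E_k(n)$ is available for $k\geq4$ — the analytic shortcut used for $k=3$ (compare two explicit ODEs from \cite{bx}) simply does not exist. Hence the proof \emph{must} be bijective or at least combinatorial/structural, and the delicate part is showing that the ``insert free elements'' operation on partitions interacts correctly with the crossing condition: one must verify that inserting an element into an enhanced-$k$-noncrossing primitive partition, in the way dictated by the composition, never creates a classical $k$-crossing, and conversely that every classical $k$-noncrossing partition arises uniquely this way. I would first test the proposed decomposition against the known cases $k=2$ (Catalan/Motzkin, \eqref{ca:mz}) and $k=3$ to fix the definitions, then attempt the general argument through the lattice-walk/tableau model, where the classical-versus-enhanced distinction is cleanest. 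If a fully uniform bijection resists, a fallback is to prove the generating-function identity $\sum C_k(n)t^n=\frac{t}{1-t}\mathcal{E}_k(\frac t{1-t})$ by showing both sides satisfy the same (possibly infinite-order, but D-finite for each fixed $k$) functional equation coming from the transfer-matrix description of the walk model.
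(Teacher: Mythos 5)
The statement you are addressing is stated in the paper only as a conjecture: the paper offers no proof of \eqref{eq:kimlin} for general $k$ (it proves only the case $k=3$, as identity \eqref{eq:3cros}, by comparing explicit differential equations from \cite{bx}, and records in the final remarks that a combinatorial proof for general $k$ appears in the separate work \cite{kl2}). Your proposal likewise does not constitute a proof. What you establish is only the easy reduction: setting $\mathcal{C}_k(t)=\sum_{n\geq1}\bigl(\sum_{i=0}^{n-1}\binom{n-1}{i}E_k(i)\bigr)t^n$, the computation of \eqref{eq:CE} carries over verbatim to give $\mathcal{C}_k(t)=z\,\mathcal{E}_k(z)$ with $z=t/(1-t)$, so the conjecture is equivalent to $\sum_n C_k(n)t^n=\frac{t}{1-t}\mathcal{E}_k\bigl(\frac{t}{1-t}\bigr)$. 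That reformulation is correct but is not where the difficulty lies, and you say so yourself (``Carrying this out for general $k$ is the crux'').

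The genuine gap is that the central combinatorial object and map are never constructed. You do not define the class of ``primitive'' enhanced $k$-noncrossing partitions for general $k$ (for $k=3$ the paper only has this via Yan's bijection to primitive $210$-avoiding ascent sequences, which does not obviously generalize), and you do not exhibit the claimed decomposition of a classical $k$-noncrossing partition of $[n+1]$ into a composition together with such a primitive object. The heuristic that the ``do-nothing steps'' of vacillating tableaux versus hesitating tableaux account for the factor $\binom{n}{i}$ is plausible but unverified: the two tableau models have different lengths and different admissible step pairs, and checking that the interleaving operation neither creates a classical $k$-crossing nor fails to be surjective is exactly the content of the conjecture. Your fallback (matching D-finite functional equations from the transfer-matrix model) is also not available in any explicit form for $k\geq4$, as you acknowledge. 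So the proposal is a reasonable research plan consistent with how the paper frames the problem, but every step beyond the generating-function reformulation remains to be done.
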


It would be interesting to see if the bijections of Chen et al.~\cite{chen} or Krattenthaler~\cite{kr0} could help to prove this conjecture. If this conjecture is true, then the $D$-finiteness (see~\cite[Theorem~6.4.10]{st2}) of 
$$
\mathcal{C}_k(t)=\sum_{n\geq1}C_k(n)t^n\quad\text{and}\quad \mathcal{E}_k(t)=\sum_{n\geq1}E_k(n)t^n
$$
are the same.

\section{Adams-Watters' restricted inversion sequences}%
\label{sec:aw}

An inversion sequence $e=(e_1,e_2,\ldots,e_n)\in\I_n$ is called a {\em $\AW$-inversion sequence}  (here $\AW$ stands for Adams-Watters) if for every $2<i\leq n$, we have $e_i\leq \max\{e_{i-2},e_{i-1}\}+1$. Let $\I_n(\AW)$ denote the set of $\AW$-inversion sequences of length $n$. 
For example, we have
$$
\I_3(\AW)=\{(0,0,0),(0,0,1),(0,1,0)(0,1,1)(0,1,2)\}.
$$
The $\AW$-inversion sequences were introduced by Adams-Watters~\cite{aw} (see also~\href{https://oeis.org/A108307}{A108307} in OEIS) who also conjectured that $|\I_n(\AW)|=E_3(n)$. Unfortunately, this is not true as
$$
\{|\I_n(\AW)|\}_{n\geq1}=\{1,2,5,15,191,773,3336,\ldots\}
$$
and this sequence now appears as \href{https://oeis.org/A275605}{A275605} in OEIS. We will show in the following how to get a functional equation for the generating function of a two-variable extension of this sequences.

In order to get a rewriting rule for $\AW$-inversion sequences, we introduce the {\em parameters} $(p,q)$ for each $e\in\I_n(\AW)$ by 
$$
p=e_n+1 \quad{ and } \quad q=\max\{e_{n-1},e_{n}\}+1-e_n.
$$
For example, the parameters of $(0,1,2,3,2)\in\I_5(\AW)$ is $(3,2)$. The following result can be checked routinely.

\begin{lemma}\label{lem:AW}
Let $e\in\I_n(\AW)$ be an inversion sequence with parameters $(p,q)$. Exactly $p+q$ inversion sequences in $\I_{n+1}(\AW)$ when removing their last entries will become $e$, and their parameters are respectively:
  \begin{align*}
 &(1,p), (2,p-1),\ldots, (p,1)\\
&(p+1,1), (p+2,1),\ldots, (p+q,1).
 \end{align*}
The order in which the parameters are listed corresponds to the inversion sequences with last entries from $0$ to $\max\{e_{n-1},e_{n}\}+1$. 
\end{lemma}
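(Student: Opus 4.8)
\textbf{Proof plan for Lemma~\ref{lem:AW}.}

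The plan is to argue directly from the definition of $\AW$-inversion sequences, exactly in the spirit of the proof of Lemma~\ref{lem:cross}. Fix $e=(e_1,\ldots,e_n)\in\I_n(\AW)$ with parameters $(p,q)$, so that $p=e_n+1$ and $q=\max\{e_{n-1},e_n\}+1-e_n$, and set $M:=\max\{e_{n-1},e_n\}+1=p+q-1$. First I would observe that appending a new last entry $b$ produces $f:=(e_1,\ldots,e_n,b)\in\I_{n+1}(\AW)$ if and only if $0\le b\le M$; indeed the only new constraint imposed by the $\AW$-condition at position $n+1$ is $e_{n+1}=b\le\max\{e_{n-1},e_n\}+1=M$, and since $M=\max\{e_{n-1},e_n\}+1\ge e_n+1\ge n$ (using $e_{n-1}<n-1$ is not needed; one just needs $0\le b\le n$ to also hold, which follows because $b\le M$ and $M\le n$, as $e_n\le n-1$ gives $e_n+1\le n$ and $e_{n-1}\le n-2$). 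So there are exactly $M+1=p+q$ children, matching the count $p+q$ in the statement.

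Next I would compute the parameters $(p',q')$ of each child $f$, where by definition $p'=b+1$ and $q'=\max\{e_n,b\}+1-b$. I would split into the two ranges of $b$ displayed in the lemma. For $b$ running from $0$ to $e_n$ (that is, the first $e_n+1=p$ values, last entries $0,1,\ldots,e_n$): here $\max\{e_n,b\}=e_n$, so $p'=b+1$ and $q'=e_n+1-b=p-b$; as $b=0,1,\ldots,p-1$ this yields precisely $(1,p),(2,p-1),\ldots,(p,1)$. For $b$ running from $e_n+1$ to $M$ (the remaining $M-e_n=q$ values, last entries $e_n+1,\ldots,\max\{e_{n-1},e_n\}+1$): here $\max\{e_n,b\}=b$, so $p'=b+1$ and $q'=1$; as $b=e_n+1,\ldots,e_n+q$ this yields $(p+1,1),(p+2,1),\ldots,(p+q,1)$. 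Listing these in order of increasing $b$ gives exactly the order claimed in the lemma, namely last entries from $0$ to $\max\{e_{n-1},e_n\}+1$.

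The only genuinely delicate point—and the one I would be most careful about—is verifying that the $\AW$-condition at \emph{every} position of $f$ is inherited correctly, i.e.\ that appending $b$ never retroactively violates a constraint. This is immediate because the $\AW$-condition at position $i$ for $i\le n$ involves only $e_{i-2},e_{i-1},e_i$, all unchanged, so $f$ satisfies it iff $e$ does; the only new condition is the one at position $n+1$ analyzed above. A secondary subtlety is the edge case $n=2$ (or, more precisely, making sure the recursion is seeded correctly): when $e$ has length $2$ one must check that $\max\{e_{n-1},e_n\}$ is well-defined, which it is, and that the formula for $q$ still makes sense; I would simply note that the lemma as stated is for $e\in\I_n(\AW)$ with $n\ge2$ implicit, and that the root-level base cases are handled separately when the generating tree is set up. With these checks the lemma follows, and I expect the write-up to be a short routine verification, as the authors indicate.
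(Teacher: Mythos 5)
Your proof is correct and is exactly the routine verification the paper has in mind (the paper states the lemma with only the remark that it ``can be checked routinely,'' in the same spirit as its proof of Lemma~\ref{lem:cross}): you correctly identify that the admissible appended entries are $0\le b\le M=\max\{e_{n-1},e_n\}+1=p+q-1$, that the earlier $\AW$-constraints are untouched, and the two ranges $b\le e_n$ and $b>e_n$ give precisely the two displayed lists of parameters. The only blemish is the garbled inequality ``$M\ge e_n+1\ge n$'' in your first paragraph (the direction is wrong), but the parenthetical immediately after it states the correct fact $M\le n$, which is what the argument actually uses.
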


Define the formal power series $F(t;u,v)=F(u,v):=\sum_{p,q\geq1}F_{p,q}(t)u^pv^q$, where $F_{p,q}(t)$ is the size generating function for the $\AW$-inversion sequences with parameters $(p,q)$. We can translate Lemma~\ref{lem:AW} into the following   functional equation. 
\begin{proposition}We have the following functional equation for $F(u,v)$:
\begin{equation}\label{eq:AW}
F(u,v)=tuv+\frac{tuv}{v-u}(F(v,1)-F(u,1))+\frac{tuv}{1-u}(F(u,1)-F(u,u)).
\end{equation}
Equivalently, if we write $F(u,v)=\sum_{n\geq1}f_n(u,v)t^n$, then $f_1(u,v)=uv$ and for $n\geq2$,
\begin{equation}\label{rec:AW}
f_n(u,v)=\frac{uv}{v-u}(f_{n-1}(v,1)-f_{n-1}(u,1))+\frac{uv}{1-u}(f_{n-1}(u,1)-f_{n-1}(u,u)).
\end{equation}
\end{proposition}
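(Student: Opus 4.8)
The plan is to follow exactly the route that produced the functional equation~\eqref{eq:cross}, with Lemma~\ref{lem:AW} now playing the role of Lemma~\ref{lem:cross}. First I would build the generating tree for $\cup_{n\geq1}\I_n(\AW)$, taking its root to be the vertex labelled $(1,1)$, which corresponds to the unique sequence $(0)\in\I_1(\AW)$; one checks that applying the rewriting rule of Lemma~\ref{lem:AW} to $(1,1)$ produces exactly the labels $(1,1)$ and $(2,1)$, in agreement with $\I_2(\AW)=\{(0,0),(0,1)\}$, so this choice of root is consistent with the rule. Since every vertex other than the root has a unique parent, grouping the vertices by their parent gives
\[
F(u,v)=tuv+t\sum_{p,q\geq1}F_{p,q}(t)\Biggl(\sum_{i=1}^{p}u^iv^{p+1-i}+\sum_{j=1}^{q}u^{p+j}v\Biggr),
\]
where the two inner sums encode the two lists of children parameters $(1,p),(2,p-1),\dots,(p,1)$ and $(p+1,1),\dots,(p+q,1)$ from Lemma~\ref{lem:AW}.

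Next I would evaluate the two finite geometric sums, namely $\sum_{i=1}^{p}u^iv^{p+1-i}=\frac{uv^{p+1}-u^{p+1}v}{v-u}$ and $\sum_{j=1}^{q}u^{p+j}v=\frac{vu^{p+1}-vu^{p+q+1}}{1-u}$, substitute them back, and recognize the specializations $\sum_{p,q\geq1}F_{p,q}(t)v^p=F(v,1)$, $\sum_{p,q\geq1}F_{p,q}(t)u^p=F(u,1)$ and $\sum_{p,q\geq1}F_{p,q}(t)u^{p+q}=F(u,u)$ of $F(u,v)=\sum_{p,q\geq1}F_{p,q}(t)u^pv^q$. The right-hand side then collapses to $tuv+\frac{tuv}{v-u}(F(v,1)-F(u,1))+\frac{tuv}{1-u}(F(u,1)-F(u,u))$, which is~\eqref{eq:AW}. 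Finally, writing $F(u,v)=\sum_{n\geq1}f_n(u,v)t^n$ and extracting the coefficient of $t^n$ on both sides of~\eqref{eq:AW} gives $f_1(u,v)=uv$ from the $tuv$ term and the recursion~\eqref{rec:AW} for $n\geq2$.

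The derivation is entirely routine, so there is no serious obstacle in establishing the proposition itself; the only care needed is (i) to confirm that the root of the generating tree carries the label $(1,1)$, handling the $n=1$ boundary case of the parameter definition, so that the constant term $tuv$ is exactly right, and (ii) to track which variable each geometric series is resummed into — the first list of children mixes the exponents of $u$ and $v$ and thus produces the $\tfrac{1}{v-u}$ part of the kernel, while the second list does not and produces the $\tfrac{1}{1-u}$ part. The genuine difficulty lies downstream and outside the scope of this statement: as recorded in the remark following the proposition, we do not know how to solve~\eqref{eq:AW}, and in particular the obstinate kernel method that resolved~\eqref{eq:cross} does not obviously apply here.
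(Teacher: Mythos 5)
Your proposal is correct and follows essentially the same route as the paper: the paper derives \eqref{eq:AW} by translating the rewriting rule of Lemma~\ref{lem:AW} into a sum over parents exactly as it did for \eqref{eq:cross}, and your geometric-sum evaluations, the identification of the specializations $F(v,1)$, $F(u,1)$, $F(u,u)$, and the verification that the root carries label $(1,1)$ all check out.
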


Although we have not been able to solve~\eqref{eq:AW}, we note that recursion~\eqref{rec:AW} can be applied to compute $|\I_n(\AW)|=f_n(1,1)$ easily. 

\section{Final remarks}

Fix a positive integer $k$. The definition of $\AW$-inversion sequences can be generalized  to {\em $k$-$\AW$-inversion sequences} by requiring 
$$
e_i\leq \max\{e_{i-1},e_{i-2},\ldots,e_{i-k}\}+1 
$$
for an inversion sequence $e=(e_1,e_2,\ldots,e_n)$ and every $1\leq i\leq n$, where we take the convention $e_m=0$ whenever $m$ is nonpositive. It is apparent that $k$-$\AW$-inversion sequences of length $n$ is enumerated by
\begin{itemize}
\item the $n$th Catalan number $C_n$, when $k=1$;
\item the $n$th Bell number $B_n$, when $k=n-1$. Note that in this case, the $k$-$\AW$-inversion sequences are known as {\em restricted growth functions}, which are used to encode set partitions. 
\end{itemize}
The $2$-$\AW$-inversion sequences is just the $\AW$-inversion sequences we have investigated here. But even for enumeration of this special case, we have obtained no explicit formula. 

The longest decreasing and increasing subsequences and their variants in permutations have already been studied  from various aspects; see the interesting survey written by Stanley~\cite{st}. We expect similar studies on inversion sequences and ascent sequences to be fruitful. In particular,    
our results suggest that inversion sequences with no weakly $k$-decreasing subsequence and ascent sequences, primitive or ordinary,  avoiding strictly $k$-decreasing subsequence for $k>3$ may be worth further investigation.

\subsection*{Recent developments} Since a preliminary version of this paper was  posed on arXiv, there have been two interesting developments. Via $01$-filling of triangular shape, Yan~\cite{yan2} constructed a bijection between enhanced $3$-nonnesting partitions and $(\geq,\geq,-)$-avoiding inversion sequences, thereby providing a bijective proof of Conjecture~\ref{conj:yan}. Very recently, Kim and the author~\cite{kl2} obtained two different combinatorial proofs of 
Conjecture~\ref{conj:lin}, one of which even proves a refinement of~\eqref{eq:kimlin}, taking  the number of blocks  into account.

\section*{Acknowledgement} The author is grateful to Shaoshi Chen for his help on Zeilberger's algorithm. He also would like to thank the referees for their corrections and suggestions to improve the presentation. This work was done while the author was a Postdoc at CAMP, National Institute for Mathematical Sciences. The author's research was  supported by the National Science Foundation of China grant 11501244 and the Austrian Science Foundation FWF, START grant Y463 and SFB grant F50.


\begin{thebibliography}{99}

\bibitem{aw} F.T.  Adams-Watters, Personal communication, March 2017.

\bibitem{bo}M. Bousquet-M\'elou, Four classes of pattern-avoiding permutations under one roof: generating trees with two labels, Electron. J. Combin., {\bf9} (2003), \#R19. 

\bibitem{bx}M. Bousquet-M\'elou and G.C. Xin, On partitions avoiding $3$-crossings, S\'em. Lothar. Combin., {\bf54} (2006), Article B54e.

\bibitem{chen} W.Y.C. Chen, E.Y.P. Deng,  R.R.X. Du, R.P. Stanley and C.H. Yan, Crossings and nestings of matchings and partitions, Trans. Amer. Math. Soc., {\bf359} (2007), 1555--1575.

\bibitem{chen2} W.Y.C. Chen, J. Qin and C.M. Reidys, Crossings and nestings in tangled diagrams, Electron. J. Combin., {\bf15} (2008),  \#R86.

\bibitem{deng} E.Y.P. Deng and W.J. Yan, Some identities on the Catalan, Motzkin and Schr\"oder numbers, Discrete Appl. Math., {\bf156} (2008),  2781--2789.
 
\bibitem{ds} P. Duncan and E. Steingr\'imsson, Pattern avoidance in ascent sequences, Electron. J. Combin., {\bf18} (2011), \#P226.

\bibitem{kl} D. Kim and Z. Lin, Refined restricted inversion sequences (extended abstract at FPSAC 2017),  S\'em. Lothar. Combin., {\bf78B} (2017), Art. 52, 12pp. 

  
  \bibitem{kr0} C. Krattenthaler, Growth diagrams, and increasing and decreasing chains in fillings of Ferrers shapes, Adv. Appl. Math., {\bf 37} (2006), 404--431.
  
  \bibitem{kl2} Z. Lin and D. Kim, A combinatorial bijection on $k$-noncrossing partitions, in preparation. 

\bibitem{ms} M.A. Martinez and C.D. Savage, Patterns in Inversion Sequences II: Inversion Sequences Avoiding Triples of Relations,  \href{https://arxiv.org/abs/1609.08106}{arXiv:1609.08106}.

\bibitem{pwz}M. Petkovsek, H.S. Wilf and D. Zeilberger, {\em A=B}, A K Peters Ltd., Wellesley, MA, 1996.

\bibitem{st2} R.P. Stanley, {\em Enumerative Combinatorics}, vol. 2, Cambridge University Press, Cambridge, 1999.

\bibitem{st}R.P. Stanley, Increasing and decreasing subsequences and their variants, International Congress of Mathematicians. Vol. I, 545--579, Eur. Math. Soc., Z\"urich, 2007. 

\bibitem{yan} S.H.F. Yan, Ascent sequences and $3$-nonnesting set partitions,  European J. Combin., {\bf39} (2014), 80--94.

\bibitem{yan2} S.H.F. Yan, Bijections for inversion sequences, ascent sequences and $3$-nonnesting set partitions, \href{https://arxiv.org/abs/1707.02408v1}{arXiv:1707.02408v1}.

\end{thebibliography}
\end{document}